\documentclass[11pt,a4paper]{article}
\usepackage[T1]{fontenc}
\usepackage[utf8]{inputenc}
\usepackage{lmodern}
\usepackage{mathtools,amssymb,amsthm}
\usepackage{microtype}
\usepackage[shortlabels]{enumitem}
\usepackage[a4paper,margin=25mm]{geometry}
\usepackage{setspace}
\usepackage{placeins}
\usepackage{tikz}
\usetikzlibrary{fit,backgrounds,calc}
\usepackage[hidelinks]{hyperref}
\hypersetup{
 pdftitle={A linear upper bound for Berge saturation numbers},
 pdfauthor={Tianying Xie},
 pdfsubject={Saturation numbers for Berge hypergraphs},
 pdfkeywords={Berge hypergraphs, saturation, vertex cover, matching}
}
\allowdisplaybreaks[1]
\numberwithin{equation}{section}
\newtheorem{theorem}{Theorem}[section]

\newtheorem{lemma}[theorem]{Lemma}
\newtheorem{conjecture}[theorem]{Conjecture}
\theoremstyle{definition}
\newtheorem{construction}[theorem]{Construction}
\theoremstyle{plain}
\DeclareMathOperator{\sat}{sat}
\newcommand{\BF}{\textup{Berge-}F}
\newcommand{\HH}{\mathcal H}
\newcommand{\JJ}{\mathcal J}
\newcommand{\disj}{\mathbin{\dot\cup}}

\title{A linear upper bound for Berge saturation numbers}
\author{Tianying Xie\footnote{\raggedright School of Mathematics and Statistics,
Fuzhou University, Fuzhou, Fujian 350108, China
(\texttt{xiety@fzu.edu.cn}). Research supported by the National Key
Research and Development Program of China under grant No.~2023YFA1010201
and the National Natural Science Foundation of China under grant
No.~12501474.}}
\date{}

\begin{document}
\maketitle
\begin{abstract}
For a graph $F$ with at least one edge, a $k$-uniform hypergraph is
Berge-$F$-saturated if it contains no Berge-$F$, but adding any missing
hyperedge creates a Berge-$F$. The saturation number $\sat_k(n,\BF)$
is the minimum number of edges in such a hypergraph on $n$ vertices.
English, Gordon, Graber, Methuku and Sullivan conjectured a linear upper
bound for every fixed finite family of forbidden graphs. We prove the
single-graph case: $\sat_k(n,\BF)=O_{F,k}(n)$ for every fixed integer
$k\ge2$ and every fixed finite simple graph $F$ with at least one edge. 
The proof combines a sparse construction with degree and matching arguments.
\end{abstract}

\section{Introduction}\label{sec:introduction}

Let $F$ be a finite simple graph and let $\HH$ be a hypergraph with
$V(F)\subseteq V(\HH)$. We say that
$\HH$ is a \emph{Berge-$F$} if there is a bijection
$\psi:E(F)\longrightarrow E(\HH)$ such that $e\subseteq\psi(e)$
for every $e\in E(F)$. We call $F$ a \emph{core graph} of $\HH$
and its vertices the \emph{core vertices}.
For $k\ge2$, let $\mathcal B_k(F)$ denote the family of $k$-uniform
Berge-$F$ hypergraphs, considered up to isomorphism, with isolated
vertices outside the core graph omitted.
A $k$-uniform hypergraph \emph{contains a Berge-$F$} if it has a
subhypergraph isomorphic to a member of $\mathcal B_k(F)$. 
The hypergraph is \emph{Berge-$F$-free} if it contains no Berge-$F$.
It is \emph{Berge-$F$-saturated} if
it is Berge-$F$-free and adding any missing $k$-edge creates a Berge-$F$.
For a graph $F$, the saturation number is
\[
 \sat_k(n,\BF)=\min\{|E(\HH)|:|V(\HH)|=n,\ \HH\text{ is $k$-uniform and
 Berge-$F$-saturated}\}.
\]

For $k=2$, this is ordinary graph saturation, and we write
$\sat(n,F)=\sat_2(n,\BF)$. Its study was initiated by
Erd\H{o}s, Hajnal and Moon~\cite{EHM}. K\'aszonyi and
Tuza~\cite{KT} proved that $\sat(n,F)=O_F(n)$ for every fixed graph
$F$ with at least one edge. For $k$-uniform hypergraphs, Pikhurko~\cite{Pik} proved an
$O(n^{k-1})$ upper bound for every fixed finite family of forbidden
hypergraphs. Every member of $\mathcal B_k(F)$ has at most
$|V(F)|+(k-2)|E(F)|$ vertices, so $\mathcal B_k(F)$ is finite.
Pikhurko's result therefore gives
$\sat_k(n,\BF)=O_{F,k}(n^{k-1})$.

English, Gordon, Graber, Methuku and Sullivan~\cite{EGGMS}
initiated the study of Berge saturation and obtained results for
paths, cycles, stars, triangles and matchings. They proposed the
following conjecture. For a family $\mathcal F$ of graphs, put
$\mathcal B_k(\mathcal F)=\bigcup_{F\in\mathcal F}\mathcal B_k(F)$.
A hypergraph is Berge-$\mathcal F$-free if it is Berge-$F$-free for
every $F\in\mathcal F$, and is Berge-$\mathcal F$-saturated if it is
Berge-$\mathcal F$-free and adding any missing $k$-edge creates a
Berge-$F$ for some $F\in\mathcal F$. We write
$\sat_k(n,\textup{Berge-}\mathcal F)=\sat_k(n,\mathcal B_k(\mathcal F))$.

\begin{conjecture}[English et al.~{\cite[Conjecture~6.3]{EGGMS}}]\label{conj:family}
For every fixed integer $k\ge2$ and every fixed finite nonempty family
$\mathcal F$ of graphs, each having at least one edge,
\[
 \sat_k(n,\textup{Berge-}\mathcal F)=O_{\mathcal F,k}(n).
\]
\end{conjecture}

English, Gerbner, Methuku and Tait~\cite{EGMT} proved the single-graph
case for all $3\le k\le5$. For arbitrary $k\ge3$, they established
the linear bound when $F$ has a tree component, when its vertex cover
number $\beta(F)$ is at least $k+1$, or when it has a component with
vertex feedback number at most two. Gerbner, Patk\'os, Tuza and
Vizer~\cite[Theorems~1.2 and~1.3]{GPTV} subsequently proved linear bounds
for complete multipartite graphs with at least three nonempty parts,
and for graphs having an edge whose endpoint degrees are the two
smallest entries of the degree sequence.
English, Kritschgau, Nahvi and Sprangel~\cite{EKNS} obtained further
linear bounds when $\delta(F)>\beta(F)/2$ or $g(F)>f(F)$, where
$\delta(F)$ is the minimum degree, $g(F)$ is the girth and $f(F)$
is the minimum number of vertices whose deletion leaves a forest.
Exact and asymptotic saturation numbers, as well as saturation spectra,
have also been studied for Berge stars and cliques;
see~\cite{AE,BEHJR,EKNS,CHX}.

In this paper, we prove the single-graph case of
Conjecture~\ref{conj:family} for every uniformity.

\begin{theorem}\label{thm:main}
For every fixed integer $k\ge2$ and every fixed finite simple graph $F$
with at least one edge,
\[
 \sat_k(n,\BF)=O_{F,k}(n).
\]
\end{theorem}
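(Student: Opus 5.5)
We prove Theorem~\ref{thm:main} by building, for each large $n$, an explicit Berge-$F$-free $k$-uniform hypergraph $\HH_0$ on $n$ vertices with $O_{F,k}(n)$ edges. We then extend it greedily to a Berge-$F$-saturated hypergraph $\HH\supseteq\HH_0$ and show that the extension adds only $O_{F,k}(n)$ edges. The case $k=2$ is the K\'aszonyi--Tuza theorem~\cite{KT}, so we assume $k\ge3$.

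Let $\beta=\beta(F)$ be the vertex cover number and $t=|V(F)|$. The base construction is as follows. Fix a set $S$ of $s=\beta-1$ ``hub'' vertices and a set $T$ of $C=C(F,k)$ further vertices. On $S\cup T$ place a bounded gadget $\mathcal G$, chosen Berge-$F$-free and as dense as possible. Attach each of the remaining $n-s-C$ vertices $v$ by a bounded number of edges, each of the form $\{v\}\cup S\cup(\text{fixed $(k-1-s)$-subset of }T)$ (if $s\ge k-1$, use suitable $(k-1)$-subsets of $S$ instead). The intended picture is that every edge meets $S$ heavily. Then any Berge-$F$ must have a vertex cover built from the core vertices lying in $S\cup T$. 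Since the edges through outside vertices are nearly ``parallel'', a matching argument shows that the number of edges of $F$ whose images use outside vertices is limited by $|S|<\beta$. If instead $\beta\ge k+1$, we use the stated result of English, Gerbner, Methuku and Tait directly, so we may assume $\beta\le k$ and thus $s<k$.

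The saturation step: add missing edges one at a time as long as no Berge-$F$ is created, producing a saturated $\HH$. The key claim is that the final $\HH$ has $O(n)$ edges. We prove this by a degree argument.
\begin{enumerate}[(i)]
\item Every vertex outside $S\cup T$ that has degree greater than $D=D(F,k)$ in $\HH$ yields, by a sunflower/matching lemma, many edges through it that pairwise intersect only within a bounded set.
\item Combined with the $n$ pendant-type edges, such a vertex would complete a Berge-$F$, using it together with $S$ as a vertex cover of a copy of $F$. Here we embed $F$ by mapping a minimum vertex cover into $S\cup\{v\}$ and the independent side into outside vertices, with distinct hyperedges supplied by the high degree.
\end{enumerate}
Hence all but $|S\cup T|=O(1)$ vertices have degree at most $D$, and edges containing only bounded-degree vertices number at most $Dn$. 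The remaining edges lie entirely within $S\cup T$, of which there are $O(1)$, or contain some outside vertex and are already counted. So $|E(\HH)|\le Dn+\binom{s+C}{k}=O(n)$.

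The main obstacle is (ii), the embedding step. It requires $F$ to admit an embedding with a cover of size $\le s+1$ into $S\cup\{v\}$, which fails when $\beta(F)=\beta$ exactly matches $s+1$ only if $F$'s structure is compatible. In general one needs the hub size tuned so that $\HH_0$ is free but one high-degree outside vertex plus $S$ suffices. I would first try taking $s=\beta-1$ and verifying freeness by a K\H{o}nig-type matching count: any Berge-$F$ in a hypergraph whose edges outside $S\cup T$ are sparse gives a cover of $F$ of size $\le s$ plus bounded junk. Then I would handle the leftover junk by enlarging $T$ and using pigeonhole on the finitely many types of attachment.
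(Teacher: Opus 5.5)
\textbf{There is a genuine gap: step (ii) is false for your construction.} Your proposal also never fixes the number of attachments per outside vertex or the gadget $\mathcal G$, and freeness of $\HH_0$ rests on an unspecified ``K\H{o}nig-type matching count''. Step (ii), which you flag yourself, fails in a concrete case.

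Take $F=K_4$ and $k=3$. Then $\beta=3=k$, so $s=2=k-1$. The only admissible pendant edge at an outside vertex $v$ is $\{v\}\cup S=\{v,s_1,s_2\}$, so every outside vertex meets the hubs in a single hyperedge. Take $\mathcal G$ empty, so the vertices of $T$ are isolated. Now add the edges $\{v,x_i,y_i\}$, $1\le i\le D$, for pairwise disjoint pairs of outside vertices. More drastically, add a Berge-triangle-free $3$-graph on the outside vertices with $n^{2-o(1)}$ edges (Ruzsa--Szemer\'edi).

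No Berge-$K_4$ arises. If both hubs were core vertices, the other two core vertices would lie outside. Each of them would need two distinct hyperedges meeting $S$, but it has only its pendant edge. Otherwise at least three core vertices lie outside. The three hyperedges joining them each contain two outside vertices, so they are added edges, and they would form a Berge triangle among the added edges.

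So an outside vertex can have arbitrarily large degree without completing a Berge-$F$. Moreover, some greedy extension of your $\HH_0$ has superlinearly many edges, which refutes your ``key claim''.

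In the other regime, $s<k-1$, the problem is reversed. Each pendant edge contains the $(k-1)$-set $S\cup X_j$ of linear-degree vertices, and $k-1\ge\beta$. So whether $\HH_0$ is Berge-$F$-free depends entirely on how the number of attachments compares with the degrees in $F$. This is exactly the tension between freeness and (ii) that you leave unresolved.

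\textbf{The missing idea is to calibrate by degrees rather than by a vertex-cover hub.} The paper sets $d=\min_{xy\in E(F)}\max\{d_F(x),d_F(y)\}$ and $A=\{v:d_F(v)\ge d\}$. It then does the following.
\begin{itemize}
\item It deletes a forest $R$ (an edge or an induced star) meeting $A$, such that every edge from $R$ to the rest of $F$ ends in $S=A\setminus R$.
\item It keeps a fixed expansion of $F-R$.
\item It attaches parts $B_i$ of size $k-t$ by the edges $B_i\cup P_j$. Here $P_1,\dots,P_q$, with $q\le d-1$, are distinct $t$-subsets of $S$ with $t<k$, and their incidence graph has a matching saturating every $N_F(v)\cap S$, $v\in R$.
\end{itemize}
Freeness is then a simple count: only the $|S|<|A|$ vertices of $S$ can have degree at least $d$. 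The matchings guarantee that any Berge-$F[R]$ in the added edges with core vertices in distinct parts completes a Berge-$F$. A greedy forest embedding turns this into bounded degeneracy of the added edges.

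For $K_4$ and $k=3$, this gives parts of size two with edges $B_i\cup\{z_1\}$ and $B_i\cup\{z_2\}$, where $S=\{z_1,z_2\}$. Each outside vertex reaches each remaining high-degree vertex through a different hyperedge, which is exactly what your book lacks. Producing such $P_j$ in general is the substantive part of the proof (Cases 1, 2.1, 2.2 and the exchange Lemma~\ref{lem:distinct}). It has no counterpart in your proposal.
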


Our proof builds on the construction of English, Gerbner, Methuku and
Tait~\cite[Construction~2.1]{EGMT} and their argument for bounding the
number of additional edges. The case in which we delete two adjacent
vertices is also related to the type-I construction of Gerbner,
Patk\'os, Tuza and Vizer~\cite[proof of Theorem~1.3]{GPTV}.
We replace the full family of subsets of a prescribed size by a selected
family that admits the required matchings.

The new step is to make this choice uniformly in the remaining range
$\beta(F)\le k$. A degree threshold identifies either two adjacent
vertices or an induced star whose vertices can be deleted from $F$.
In the star case, we choose subsets of size strictly less than $k$.
When more than $k$ high-degree vertices remain after deleting the star,
a minimum vertex cover gives initial indexed subsets of size at most
$\beta(F)-1$. We enlarge them to $(k-1)$-subsets and make them distinct
by an exchange argument, preserving the required matchings.
The strict bound below $k$ leaves room for a nonempty part of outside
vertices in each hyperedge. A count of high-degree vertices then excludes
Berge-$F$ from the construction, and a greedy forest embedding bounds
all Berge-$F$-free extensions.

Section~\ref{sec:prelim} records the known cases and proves the exchange
lemma. Section~\ref{sec:proof} gives the construction, verifies it, and
chooses its parameters to prove Theorem~\ref{thm:main}.

\section{Preliminaries}\label{sec:prelim}

All graphs and hypergraphs are finite and simple. A hypergraph is
$k$-uniform if each hyperedge has size $k$.
For a graph $F$, let $V(F)$ and $E(F)$ denote its vertex and edge sets,
and let $N_F(v)$ and $d_F(v)$ denote the neighborhood and degree of $v$.
The degree $d_{\HH}(v)$ of a vertex in a hypergraph is the number of
hyperedges containing it. For $W\subseteq V(\HH)$, the induced
subhypergraph $\HH[W]$ has vertex set $W$ and edge set
$\{h\in E(\HH):h\subseteq W\}$. For $R\subseteq V(F)$, let $F[R]$ denote
the induced subgraph, and write $F-R=F[V(F)\setminus R]$.
For disjoint vertex sets $U,W\subseteq V(F)$, let $e_F(U,W)$ be the
number of edges with one endpoint in each set.
A \emph{vertex cover} is a set of vertices meeting every edge;
its minimum size is the \emph{vertex cover number}, denoted by $\beta(F)$.
A forest is an acyclic graph and may have isolated vertices.
For a nonnegative integer $d$, a hypergraph $\HH$ is
\emph{$d$-degenerate} if every nonempty induced subhypergraph has a
vertex of degree at most $d$. 
For a positive integer $q$, put $[q]=\{1,\ldots,q\}$, and let
$\binom St$ denote the family of $t$-subsets of $S$.
For nonnegative integers $b,t$ with $t>b$, we set $\binom bt=0$.

We may assume that $F$ has no isolated
vertices, since they do not affect Berge saturation for sufficiently
large $n$. Vertices that become isolated in $F-R$ are nevertheless
retained in the construction. All constants may depend on $F$ and $k$,
but not on $n$.

We use the known cases without reproving them. The case $k=2$ follows
from~\cite{KT}. For $k\ge3$, graphs having a tree component are covered
by~\cite[Observation~2.4]{EGMT}, and the range $\beta(F)\ge k+1$
is covered by~\cite[Theorem~2.5]{EGMT}. Thus it remains to consider
the following range:
\begin{equation}\label{eq:remaining}
 k\ge3,\qquad \beta(F)\le k,\qquad
 \text{every component of $F$ contains a cycle}.
\end{equation}

\subsection{Choosing distinct subsets}

For an indexed family $P_1,\ldots,P_r\subseteq S$, its incidence graph
is the bipartite graph with vertex classes $S$ and $[r]$, where $z$
is adjacent to $i$ if and only if $z\in P_i$. Equal sets $P_i=P_j$
still correspond to distinct vertices $i$ and $j$. The following
lemma allows us to make the sets distinct while preserving the
matchings needed in the construction.

\begin{lemma}\label{lem:distinct}
Let $S$ be a set of size $c$, let $t$ be a nonnegative integer, and
let $P_1,\ldots,P_r\subseteq S$ be indexed sets satisfying
\[
 |P_i|\le t\le c,\qquad 1\le r\le\binom ct.
\]
There are pairwise distinct sets $P'_1,\ldots,P'_r\in\binom St$ with
the following property: for every $Q\subseteq S$, if the incidence graph
of $(P_1,\ldots,P_r)$ has a matching saturating $Q$, then so does the
incidence graph of $(P'_1,\ldots,P'_r)$.
\end{lemma}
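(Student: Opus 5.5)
The plan is to reduce to a pure exchange problem. Two facts drive it: enlarging the sets can only help matchings, and a repeated set can be moved one element at a time without destroying any matching.

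\emph{Step 1 (enlargement).} Since $|P_i|\le t\le c$, choose for each $i$ a set $P_i^{(0)}\in\binom St$ with $P_i\subseteq P_i^{(0)}$. The incidence graph of $(P_i)$ is a subgraph of that of $(P_i^{(0)})$ on the same vertex classes. Hence every matching saturating some $Q$ survives. So it suffices to prove the lemma when all $P_i$ are already $t$-sets. From now on we call a family of $t$-sets \emph{admissible} if it preserves every matching property of the original family. Among admissible families we take one with the maximum number of distinct values among $P_1,\dots,P_r$, and aim to show that all $r$ values are distinct.

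\emph{Step 2 (single exchange).} Suppose $P_i=P_j=A$ with $i\ne j$, and let $x\in A$, $y\in S\setminus A$. Replace $P_j$ by $A-x+y$ and keep everything else. Let $M$ be a matching saturating $Q$ in the old graph.
\begin{itemize}
\item If $j$ is unmatched, or matched to some $z\neq x$, then $M$ is still a matching, since $z\in A-x+y$.
\item If $j$ is matched to $x$ and $i$ is unmatched, we give the edge $x$ to $i$ instead (valid because $x\in P_i$) and leave $j$ unmatched.
\item If $j$ is matched to $x$ and $i$ is matched to some $w\in A$, then $w\neq x$. We swap: $i$ gets $x$ and $j$ gets $w\in A-x+y$.
\end{itemize}
In every case the set of saturated vertices of $S$ is unchanged, so $Q$ remains saturated. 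Therefore the new family is admissible.

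\emph{Step 3 (walking to an unused set).} Assume some value $A$ is repeated, say $P_i=P_j=A$. Since the values of $P_1,\dots,P_r$ lie in $\binom St$, at most $r-1$ of them are distinct, and $r\le\binom ct$. Hence some $B\in\binom St$ is unused. (If $t=0$, then $r=1$ and there is nothing to prove.) We move $P_j$ from $A$ toward $B$ by successive exchanges $x\in P_j\setminus B$, $y\in B\setminus P_j$, each reducing $|P_j\triangle B|$ by two. Before each step, the current value $C$ of $P_j$ is shared with some other index $i'$: initially $i'=i$, and otherwise we continue only in that case. So Step 2 applies with $i'$ in place of $i$, and admissibility is preserved. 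We stop at the first moment $P_j$ takes a value not held by any other index; this happens at the latest when $P_j=B$.

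Throughout the walk, $A$ stays held by $i$, and every value $P_j$ abandons was still held by another index, so no value is lost. At the end the new value of $P_j$ is unique, so the number of distinct values has increased by one. This contradicts maximality, so the maximizing family consists of pairwise distinct sets, as required.

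The main obstacle is Step 2, namely checking that the exchange really preserves every matching. The key point is that the duplicate index $i$ also contains $x$, so it can absorb the vertex $x$ that $j$ loses. Step 3 exists precisely to guarantee that such a duplicate partner exists at every intermediate stage of the walk.
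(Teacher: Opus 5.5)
Your proposal is correct and follows essentially the same route as the paper. You enlarge to $t$-sets, show that a single exchange on one copy of a repeated set preserves every saturating matching because the duplicate index absorbs $x$, and walk along a geodesic of the Johnson graph to an unoccupied $t$-set so that the number of distinct values grows. The differences are cosmetic: you stop at the first unoccupied value on the way to an arbitrary unused $B$ instead of choosing $B$ at minimum distance, and you phrase the iteration as a maximality argument.
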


\begin{proof}
First enlarge each $P_i$ to a $t$-set. This only adds edges to the
incidence graph, so every previously matchable subset of $S$ remains
matchable. If $t=0$ or $t=c$, the inequality $r\le\binom ct$ gives
$r=1$, and the enlarged family already has the required property.
We may therefore assume $0<t<c$.

We first verify that one elementary replacement preserves all the
required matchings. Suppose that two indices $i\ne j$ have the same
set $P_i=P_j=P$. Choose $x\in P$ and $y\in S\setminus P$, and replace
only $P_i$ by
\(
 P_i^*=(P\setminus\{x\})\cup\{y\}.
\)
Let $Q\subseteq S$ be any set that has a matching in the incidence
graph before this replacement, and fix a matching saturating $Q$.
The only incidence removed is the edge joining $x$ to index $i$.
If this edge is not in the matching, the matching remains valid.
Otherwise, match $x$ to index $j$, which still corresponds to $P$.
If $j$ was unmatched, this completes the modification. If $j$ was
matched to a vertex $z$, then $z\ne x$ and $z\in P$; match $z$ to
index $i$ instead, since $z\in P_i^*$. The modified matching still
saturates $Q$. Thus this replacement preserves the existence of a
matching for every subset $Q$ that was matchable before it.

To eliminate all repetitions using these replacements, consider the
Johnson graph $J(c,t)$: its vertices are the $t$-subsets of $S$, and
two vertices are adjacent if one is obtained from the other by
replacing a single element. This graph is connected. Indeed, for
two $t$-sets $U$ and $V$, we can successively replace an element of
$U\setminus V$ by an element of $V\setminus U$ until we reach $V$.

Place each index $i\in[r]$ at the vertex $P_i$ of $J(c,t)$. If a vertex
carries at least two indices, some vertex is unoccupied, because
$r\le\binom ct$. Fix a vertex carrying at least two indices, and choose
an unoccupied vertex at minimum distance from it. Every internal
vertex of a shortest path between them is occupied. Move one index
along this path, one edge at a time. Before each move its current
vertex carries at least two indices, so the replacement above applies.
At the end, the number of occupied vertices has increased by one,
and all previously matchable subsets remain matchable. Repeat until
the $r$ indices occupy distinct vertices.
\end{proof}

\section{The construction and proof of Theorem~\ref{thm:main}}
\label{sec:proof}

We first give the construction and prove that every Berge-$F$-free
hypergraph containing it has only linearly many edges. The bound on
the additional edges follows the degeneracy argument
in~\cite[Lemma~2.2]{EGMT}; we include the argument for the more general
choice of subsets used here. We then choose the parameters in two
cases to prove Theorem~\ref{thm:main}.

\subsection{The construction}\label{sec:construction}

\begin{construction}\label{con:initial}
Fix an integer $d$, sets $A,R,S\subseteq V(F)$, a graph $G$,
integers $q,t$, and subsets $P_1,\ldots,P_q$ satisfying the following
conditions:
\begin{enumerate}[label=\arabic*.,leftmargin=*,itemsep=.4\baselineskip]
\item $d\ge2$,
\[
 A=\{v\in V(F):d_F(v)\ge d\},\qquad R\cap A\ne\varnothing,
 \qquad S=A\setminus R,\qquad G=F-R.
\]
\item The graph $F[R]$ is a forest, and every edge between $R$ and
$V(G)$ has its endpoint in $V(G)$ in $S$.
\item The integers $q,t$ satisfy
\begin{equation}\label{eq:parameters} 
  1\le q\le d-1,\qquad 0\le t<k.
\end{equation}
\item The sets $P_1,\ldots,P_q$ are pairwise distinct $t$-subsets of $S$.
For each $v\in R$, put $Q_v=N_F(v)\cap S=N_F(v)\setminus R$.
We require an injection
\begin{equation}\label{eq:matching}
 \rho_v:Q_v\longrightarrow[q],\qquad
 z\in P_{\rho_v(z)}\quad(z\in Q_v).
\end{equation}
Equivalently, the incidence graph of the $P_j$ has a matching
saturating each $Q_v$. These matchings are chosen separately for
different vertices $v$.
\end{enumerate}
All the parameters above are fixed independently of $n$.
Put $a=k-t\ge1$. With these choices, for sufficiently large $n$,
construct a $k$-uniform hypergraph $\HH_0$ on an $n$-vertex set
as follows. Choose $|V(G)|$ vertices and label them by $V(G)$,
retaining any isolated vertices of $G$. For each $xy\in E(G)$,
choose a set $W_{xy}$ of $k-2$
additional vertices. These sets are pairwise disjoint and disjoint
from $V(G)$. Put
\[
 h_{xy}=\{x,y\}\cup W_{xy},\qquad
 Z_0=V(G)\cup\bigcup_{xy\in E(G)}W_{xy}.
\]
The hyperedges $h_{xy}$ form the $k$-uniform expansion of $G$.
Partition the remaining vertices into sets $B_1,\ldots,B_m$
of size $a$ and a remainder $Z_1$ of size less than $a$, where
\[
 m=\left\lfloor\frac{n-|Z_0|}{a}\right\rfloor,
 \qquad |Z_0|=|V(G)|+(k-2)|E(G)|.
\]
Thus
\[
 V(\HH_0)
 =Z_0\disj B_1\disj\cdots\disj B_m\disj Z_1,
 \qquad |B_i|=a,\quad |Z_1|<a.
\]
Set $Z=Z_0\disj Z_1$ and define the edge set by
\begin{equation}\label{eq:construction}
 E(\HH_0)
 =\{h_{xy}:xy\in E(G)\}
 \ \cup\ \{B_i\cup P_j:i\in[m],\ j\in[q]\}.
\end{equation}
\end{construction}

Every hyperedge has size $k$, and all hyperedges are distinct: the
$P_j$ are distinct, every $B_i$ is nonempty, and no $h_{xy}$ meets a
part $B_i$. The vertices of $Z_1$ are isolated. Since $G$, $q$ and $a$
are fixed independently of $n$, we have
\begin{equation}\label{eq:initial-size}
 |E(\HH_0)|=|E(G)|+qm
 =\frac qa n+O_{F,k}(1)=O_{F,k}(n).
\end{equation}
Moreover,
\begin{equation}\label{eq:fixed-size}
 |Z|\le |V(G)|+(k-2)|E(G)|+a-1.
\end{equation}
Thus $|Z|$ is bounded independently of $n$, and the remainder allows
all sufficiently large values of $n$.
An example is shown in Figure~\ref{fig:example}.

\begin{figure}[!htbp]
\centering
\begin{tikzpicture}[x=1cm,y=1cm,
 vertex/.style={circle,fill=black,inner sep=1.5pt},
 every node/.style={font=\small}]
 \begin{scope}
 \node[vertex,label=above:$u$] (u) at (0,1.8) {};
 \node[vertex,label=above left:$a$] (a) at (-1.1,0) {};
 \node[vertex,label=above right:$b$] (b) at (1.1,0) {};
 \node[vertex,label=below:$c$] (c) at (0,-0.8) {};
 \foreach \x/\y in {u/a,u/b,u/c,a/b,a/c,b/c}
   \draw (\x)--(\y);
 \node[vertex,label=above left:$u_1$] (u1) at (-0.7,2.55) {};
 \node[vertex,label=above right:$u_2$] (u2) at (0.7,2.55) {};
 \draw (u1)--(u)--(u2);
 \foreach \name/\xx/\yy/\parent in
  {a1/-1.9/0.25/a,a2/-1.85/-0.65/a,b1/1.9/0.25/b,b2/1.85/-0.65/b,c1/-0.45/-1.65/c,c2/0.45/-1.65/c}{
  \node[vertex] (\name) at (\xx,\yy) {};
  \draw (\parent)--(\name);
 }
 \draw[dashed,rounded corners=5pt] (-1.25,1.45) rectangle (1.25,3.18);
 \node[anchor=west] at (1.29,2.3) {$R$};
 \node at (0,-2.05) {(a) The graph $F$};
 \end{scope}
 \begin{scope}[shift={(6.0,0)}]
 \node at (0.5,2.8) {$S=\{a,b,c\}$};
 \node[vertex,label=left:$a$] (sa) at (-0.65,1.8) {};
 \node[vertex,label=left:$b$] (sb) at (-0.65,0.4) {};
 \node[vertex,label=left:$c$] (sc) at (-0.65,-1.0) {};
 \node[vertex,label={right:$P_1=\{a,b\}$}] (p1) at (1.0,1.8) {};
 \node[vertex,label={right:$P_2=\{b,c\}$}] (p2) at (1.0,0.4) {};
 \node[vertex,label={right:$P_3=\{a,c\}$}] (p3) at (1.0,-1.0) {};
 \draw[gray,dashed] (sb)--(p1) (sc)--(p2) (sa)--(p3);
 \draw[very thick] (sa)--(p1) (sb)--(p2) (sc)--(p3);
 \node at (0.75,-2.05) {(b) A matching saturating $S$};
 \end{scope}
\end{tikzpicture}
\caption{Let $F$ be obtained from $K_4$ by adding two pendant vertices at
each vertex, and let $k=4$. Take $d=5$, $R=\{u,u_1,u_2\}$ and
$S=\{a,b,c\}$. The displayed sets satisfy~\eqref{eq:matching}:
$Q_u=S$ and $Q_{u_1}=Q_{u_2}=\varnothing$.
In the resulting hypergraph $\HH_0$, each part $B_i$ has size two
and contributes the three edges
$B_i\cup\{a,b\}$, $B_i\cup\{b,c\}$ and $B_i\cup\{a,c\}$.
The hyperedges $h_{xy}$ form the $k$-uniform expansion of $F-R$.
In (b), lines are edges of the incidence graph, not hyperedges.}
\label{fig:example}
\end{figure}

\FloatBarrier

The next lemma explains the conditions imposed above. The
degree bound in~\eqref{eq:parameters} makes the initial hypergraph
Berge-$F$-free, while~\eqref{eq:matching} provides the hyperedges needed
for the edges between $R$ and $V(G)$.

\begin{lemma}\label{lem:construction}
Let $\HH_0$ be the hypergraph defined in Construction~\ref{con:initial}.
Then $\HH_0$ is Berge-$F$-free, and every
Berge-$F$-free $k$-uniform hypergraph on the same vertex set containing
$\HH_0$ has $O_{F,k}(n)$ edges. Consequently, an inclusion-maximal
Berge-$F$-free extension of $\HH_0$ is a Berge-$F$-saturated hypergraph
with $O_{F,k}(n)$ edges.
\end{lemma}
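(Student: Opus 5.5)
The plan has two parts. First, $\HH_0$ is Berge-$F$-free, by counting vertices of large degree. Second, any Berge-$F$-free extension has linearly many edges, because otherwise a peeling argument gives a dense subhypergraph in which the forest $F[R]$ can be embedded greedily and then completed to a Berge-$F$ using $\HH_0$.

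\emph{Freeness.} In a Berge-$F$ in $\HH_0$, each core vertex playing $v\in V(F)$ has $\HH_0$-degree at least $d_F(v)$. Every $v\in A$ must therefore sit on a vertex of $\HH_0$-degree at least $d$. I would compute the degrees in $\HH_0$ as follows.
\begin{itemize}
\item A vertex of $B_i$ lies in exactly $q\le d-1$ edges.
\item A vertex of $W_{xy}$ has degree $1$, and vertices of $Z_1$ are isolated.
\item A vertex $x\in V(G)\setminus S$ has degree $d_G(x)\le d_F(x)$. Since $A\cap V(G)=S$, we have $x\notin A$, so this degree is less than $d$.
\end{itemize}
Hence only vertices of $S$ can host vertices of $A$. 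Since $R\cap A\ne\varnothing$, we get $|S|=|A|-|A\cap R|<|A|$. The core map is injective, which gives a contradiction.

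\emph{Linear bound.} Let $\HH\supseteq\HH_0$ be Berge-$F$-free on the same vertex set. Edges inside $Z$ number at most $\binom{|Z|}{k}=O(1)$ by \eqref{eq:fixed-size}. Let $\HH'$ be the set of edges of $\HH\setminus\HH_0$ meeting $V\setminus Z$. Fix a large constant $D$. I will take
\[
D>\binom{|Z|+k|E(F)|+a|R|}{k-1}+|E(F)|.
\]
If $|E(\HH')|>Dn$, repeatedly delete a vertex outside $Z$ that lies in at most $D$ remaining edges. Each deletion destroys at most $D$ edges, so a nonempty subfamily $\HH''\subseteq\HH'$ survives. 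In $\HH''$, every vertex outside $Z$ that lies in some edge has degree greater than $D$ in $\HH''$, and such a vertex exists.

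\emph{Greedy embedding.} I embed the forest $F[R]$ into $\HH''$, tree by tree. Each $v\in R$ is mapped to a vertex $\phi(v)\in\bigcup_iB_i$, and distinct vertices of $R$ go to distinct parts $B_i$. Each edge of $F[R]$ is assigned a distinct edge of $\HH''$ containing both images. The rules are:
\begin{itemize}
\item A root, or an isolated vertex of $F[R]$, goes to any vertex outside $Z$ in an unused part that is not in any used hyperedge. For a root, I take a vertex with positive $\HH''$-degree (after the first tree this is the step I would check most carefully; more room can be kept by enlarging $D$).
\item When extending from an embedded $\phi(v)=x$ to a child $w$, let $U$ be the set of vertices lying in already used hyperedges or in already used parts; then $|U|\le k|E(F)|+a|R|$. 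Edges of $\HH''$ containing $x$ and otherwise inside $Z\cup U$ number at most $\binom{|Z|+|U|}{k-1}$. Excluding also the at most $|E(F)|$ used edges, the choice of $D$ leaves an unused edge $h\ni x$ with a vertex $y\notin Z\cup U$. Set $\phi(w)=y$.
\end{itemize}
Then $y$ lies in a fresh part, and $h$ is distinct from all earlier choices. Since $\HH''$ is disjoint from $\HH_0$, it is also distinct from all edges of $\HH_0$.

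\emph{Completing the Berge-$F$.} The edges of $G$ use the $h_{xy}$, which lie inside $Z_0$ and are disjoint from the parts. For $v\in R$ with $\phi(v)\in B_i$ and $z\in Q_v$, use the edge $B_i\cup P_{\rho_v(z)}$. It contains $\phi(v)$ and $z$ by \eqref{eq:matching}. These edges are distinct for distinct $z$ because $\rho_v$ is injective, and distinct for distinct $v$ because the parts differ. By condition~2, these cover all edges of $F$, which gives a Berge-$F$ in $\HH$, a contradiction. Hence $|E(\HH)|\le|E(\HH_0)|+\binom{|Z|}{k}+Dn=O_{F,k}(n)$ by \eqref{eq:initial-size}. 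The saturation consequence is then immediate from maximality.

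The main obstacle is the greedy step. Edges containing $x$ can all be concentrated near a bounded forbidden set. The fix is to note that such edges lie inside the bounded set $\{x\}\cup Z\cup U$, so only boundedly many of them exist, and $D$ is chosen larger than that count.
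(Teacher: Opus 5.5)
Your proof is correct and follows the paper's argument: the same count of vertices of degree at least $d$ gives freeness, and the linear bound comes from the same peeling, greedy embedding of $F[R]$ (core vertices in distinct parts outside $Z$) and completion via $h_{xy}$ and $B(v)\cup P_{\rho_v(z)}$; peeling only outside $Z$ and bounding edges inside $Z$ separately is a cosmetic variant of the paper's $\gamma$-degeneracy of $\JJ$. The root step you flagged needs no larger $D$: your child-step count, applied at any vertex of $\HH''$ outside $Z$ (all of which have degree $>D$), produces a vertex outside $Z\cup U$ lying in an edge of $\HH''$, hence of degree $>D$, which serves as the new root, just as the paper's observation $|W|>b$ does.
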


\begin{proof}
We first count the vertices of degree at least $d$ in $\HH_0$.
Every vertex of a part $B_i$ has degree $q\le d-1$.
Every vertex in a set $W_{xy}$ has degree one, and every vertex of
$Z_1$ has degree zero. If $w\in V(G)\setminus S$, then $d_F(w)<d$
and $d_{\HH_0}(w)=d_G(w)\le d_F(w)$. Thus only the vertices in $S$
can have degree at least $d$.
In a Berge-$F$, the hypergraph degree of each core vertex is at least
its degree in $F$, since its incident graph edges require distinct
hyperedges containing that vertex. But $|S|<|A|$, because $R$ meets
$A$. There are therefore too few vertices of degree at least $d$ in
$\HH_0$ to serve as the core vertices corresponding to $A$.
Hence $\HH_0$ is Berge-$F$-free.

Let $\HH\supseteq\HH_0$ be any Berge-$F$-free $k$-uniform hypergraph
on the same vertex set, and let $\JJ$ have the same vertex set and
edge set $E(\HH)\setminus E(\HH_0)$. Put $T=F[R]$.
We show that $\JJ$ contains no Berge-$T$ whose core vertices lie
outside $Z$ in pairwise distinct parts $B_i$. All vertices of $T$,
including any isolated vertices, are retained as core vertices.

Suppose such a Berge-$T$ exists. For each $xy\in E(T)$, let
$\theta(xy)$ be the corresponding hyperedge under its defining
bijection. For $v\in R$, let $B(v)$ be the part containing the core
vertex corresponding to $v$. Keep the vertices of $G$ at their fixed
labels. Assign hyperedges to the edges of $F$ as follows:
\begin{equation}\label{eq:edge-assignment}
 \begin{array}{c|l}
 \text{edge of }F&\text{hyperedge in }\HH\\ \hline
 xy\in E(G)&h_{xy}\\
 xy\in E(T)&\theta(xy)\\
 vz,\ v\in R,\ z\in Q_v&B(v)\cup P_{\rho_v(z)}.
 \end{array}
\end{equation}
For a fixed $v$, the last row uses distinct hyperedges by injectivity
of $\rho_v$. Different vertices of $R$ use different parts $B(v)$,
so their assigned hyperedges are also distinct.
The first row uses no part $B_i$, and the second row consists of
edges outside $\HH_0$. Every assigned hyperedge contains the two
core vertices corresponding to its graph edge. The fixed vertices
of $G$ and the core vertices chosen outside $Z$ are also distinct.
These assignments therefore give a Berge-$F$ in $\HH$, a contradiction.

It remains to bound the number of edges of $\JJ$. Put
\[
 p=|R|,\qquad b=|Z|+ap,\qquad
 \gamma=\binom b{k-1}+p.
\]
We claim that $\JJ$ is $\gamma$-degenerate. Suppose, to the contrary,
that a nonempty induced subhypergraph $\JJ[W]$ has minimum degree
greater than $\gamma$. Then $|W|>b$, since otherwise simplicity gives
$d_{\JJ[W]}(x)\le\binom{|W|-1}{k-1}\le\binom b{k-1}<\gamma$.
Root each component of $T$ and embed the components successively,
placing every parent before its children. We keep the chosen core
vertices outside $Z$ in distinct parts and use a different hyperedge
for each graph edge. These restrictions on parts apply only to the
core vertices, not to the other vertices of the assigned hyperedges.

To start a component, choose a vertex of $W$ outside $Z$ and the
parts already used. There are at most $p-1$ such parts, so fewer
than $b$ vertices are excluded; hence a choice exists. This also
handles isolated vertices of $T$.
Suppose next that a parent has been placed at $x$ and a child remains
to be placed. Let $Y$ be the union of $Z$ and the parts already used.
Then $x\in Y$ and $|Y|\le b$. At most $\binom b{k-1}$ hyperedges
through $x$ lie entirely in $Y$, and fewer than $p$ hyperedges have
already been assigned. Since $d_{\JJ[W]}(x)>\gamma$, there is an
unassigned hyperedge $e\ni x$ with $e\nsubseteq Y$. Choose
$y\in e\setminus Y$ for the child and assign $e$ to the corresponding
graph edge. Since $e\subseteq W$, the vertex $y$ lies in $W$ and in
a new part. Repeating this step constructs the forbidden Berge-$T$.

Thus $\JJ$ is $\gamma$-degenerate. Successively delete a vertex of
degree at most $\gamma$ from the remaining induced subhypergraph,
counting each hyperedge when its first vertex is deleted.
Each step counts at most $\gamma$ hyperedges, so $|E(\JJ)|\le\gamma n$.
By~\eqref{eq:fixed-size}, $\gamma$ is bounded
in terms of $F$ and $k$. Together with~\eqref{eq:initial-size}, this
gives $|E(\HH)|=O_{F,k}(n)$. Finally, add missing edges to $\HH_0$
one at a time whenever they do not create a Berge-$F$. Since only
finitely many $k$-edges are available, this procedure terminates at
an inclusion-maximal Berge-$F$-free extension, which is saturated
by definition and satisfies the same bound.
\end{proof}

\FloatBarrier
\subsection{Proof of Theorem~\ref{thm:main}}

\begin{proof}
The cited results in Section~\ref{sec:prelim} allow us to
assume~\eqref{eq:remaining}. Choose the degree threshold
\begin{equation}\label{eq:d}
 d=\min_{xy\in E(F)}\max\{d_F(x),d_F(y)\},\qquad r=d-1,
\end{equation}
and put
\[
 L=\{v\in V(F):d_F(v)<d\},\qquad A=V(F)\setminus L.
\]
Since every component of $F$ contains a cycle, $F$ has no $K_2$
component, and $d\ge2$. The set $L$ is independent, since an edge
with both endpoints in $L$ would contradict~\eqref{eq:d}.
Also, $|A|\ge2$: an independent set together with at most one other
vertex cannot contain a cycle.

There is a vertex of degree exactly $d$. Define
\[
 \ell=\max\{|N_F(u)\cap L|:d_F(u)=d\}.
\]
We choose $R$ and the sets $P_j$ according as $\ell\le1$ or
$\ell\ge2$, and then apply Construction~\ref{con:initial} and
Lemma~\ref{lem:construction}.

\smallskip
\noindent\textit{Case 1: deleting two adjacent vertices ($\ell\le1$).}
If $\ell=0$, choose an edge $uv$ attaining the minimum in~\eqref{eq:d}.
Both endpoints have degree $d$, and all their other neighbors lie in
$A$. If $\ell=1$, choose a vertex $u$ of degree $d$ with its unique
neighbor $v\in L$. All other neighbors of $u$ lie in $A$, and all
other neighbors of $v$ do as well, since $L$ is independent.
Thus, in either case,
\[
 d_F(u)=d\ge d_F(v),\qquad
 (N_F(u)\cup N_F(v))\setminus\{u,v\}\subseteq A.
\]
Take $R=\{u,v\}$, $S=A\setminus R$ and $G=F-R$.
Then $F[R]$ is an edge and every edge from $R$ to $V(G)$ ends in $S$.
Write
\[
 U=N_F(u)\setminus\{v\},\qquad W=N_F(v)\setminus\{u\}.
\]
We have $U,W\subseteq S$, $|U|=r$ and $|W|\le r$.
We will choose $r$ distinct sets of a common size, either one or two,
whose incidence graph has matchings saturating both $U$ and $W$.

If $W\subseteq U$, take the $r$ singleton sets $\{z\}$ for $z\in U$.
They have matchings saturating both $U$ and $W$.
Otherwise, extend $W$ to an $r$-set $W'\subseteq U\cup W$.
Enumerate $U=\{u_1,\ldots,u_r\}$ and $W'=\{w_1,\ldots,w_r\}$,
and initially take $P_i=\{u_i,w_i\}$. These indexed sets have size at
most two and admit matchings saturating both $U$ and $W'$.
Since $W\nsubseteq U$, we have $|S|\ge |U\cup W|\ge r+1$, and hence
$r\le\binom{|S|}{2}$. Lemma~\ref{lem:distinct}, applied with $t=2$,
gives $r$ distinct pairs. Use these pairs as $P_1,\ldots,P_r$.
Their incidence graph has a matching saturating $U$ and a matching
saturating $W'$, and hence a matching saturating $W$.

In both subcases, $q=r=d-1$ and $t$ is one or two. Since $k\ge3$,
conditions~\eqref{eq:parameters} and~\eqref{eq:matching} hold.
With $a=k-t$ and $G=F-R$, form $\HH_0$ as in
Construction~\ref{con:initial}. Lemma~\ref{lem:construction}
gives the required saturated hypergraph.

\smallskip
\noindent\textit{Case 2: deleting an induced star ($\ell\ge2$).}
Choose $u$ with $d_F(u)=d$ and $|N_F(u)\cap L|=\ell$, and put
\begin{equation}\label{eq:star}
 R=\{u\}\cup(N_F(u)\cap L),\qquad S=A\setminus\{u\},
 \qquad G=F-R.
\end{equation}
Since $L$ is independent, $F[R]$ is an induced star and every edge
from $R$ to $V(G)$ ends in $S$. The set $S$ is nonempty since $|A|\ge2$.
Moreover,
\begin{equation}\label{eq:Qsize}
 |Q_u|=d-\ell\le r-1,\qquad
 |Q_v|=d_F(v)-1\le r-1\quad(v\in R\setminus\{u\}).
\end{equation}
Here $d\ge3$: if $d=2$, the two lower-degree neighbors of $u$ would
both have degree one, making their component a three-vertex path,
contrary to~\eqref{eq:remaining}. Thus $r\ge2$.
It remains to find the sets satisfying~\eqref{eq:parameters}
and~\eqref{eq:matching}. Write $c=|S|$.

\smallskip
\noindent\textit{Case 2.1: $c\le k$.}
If $c=1$, take $q=1$, $t=1$ and $P_1=S$. The matching condition holds
because every $Q_v$ is a subset of $S$.
For $2\le c\le k$, put $q=\min\{r,c\}$, choose distinct
$s_1,\ldots,s_q\in S$, and take
\begin{equation}\label{eq:smallS}
 P_j=S\setminus\{s_j\}\qquad(j\in[q]).
\end{equation}
These sets have common size $t=c-1<k$.
To verify~\eqref{eq:matching}, fix $Q_v$ and apply Hall's theorem.
Each vertex of $Q_v$ belongs to at least $q-1\ge1$ of the sets $P_j$.
Any subset of $Q_v$ of size at least two meets every $P_j$, and
therefore has all $q$ indices as neighbors. Since
$|Q_v|\le\min\{r-1,c\}\le q$, Hall's condition holds.

\smallskip
\noindent\textit{Case 2.2: $c>k$.}
Let $K$ be a minimum vertex cover and write $\beta=|K|=\beta(F)$.
Since $c>k\ge\beta$, there is a vertex $w\in S\setminus K$.
Every neighbor of $w$ belongs to $K$, so
\begin{equation}\label{eq:r-bound}
 d\le d_F(w)\le\beta\le k,\qquad r\le k-1.
\end{equation}
Put $B=K\cap S$ and $X=K\cap R$.
The set $X$ is nonempty because $K$ covers the edges of the star $F[R]$.
The set $B$ is also nonempty. Indeed, otherwise
$K\subseteq L\cup\{u\}$, so every vertex of $K$ has degree at most $d$.
All neighbors of $S$ would lie in $K$, and $S\cap K=\varnothing$.
Hence
\[
 dc\le\sum_{s\in S}d_F(s)
 =e_F(S,K)\le\sum_{x\in K}d_F(x)\le d\beta<dc,
\]
a contradiction.

We first choose $r$ indexed subsets, allowing repetitions and different
sizes. Choose $b_0\in B$ and $x_0\in X$.
For each $x\in X$, assign distinct colors from $[r]$ to the edges
from $x$ to $S\setminus K$. Require color $1$ to be unused at $x$
whenever $x=x_0$ or $b_0\in Q_x$.
These assignments can be made independently by~\eqref{eq:Qsize},
since there are at most $r-1$ edges to color at each $x$.
No restriction is imposed on colors at their endpoints in $S\setminus K$.
For $j\in[r]$, let $E_j\subseteq S\setminus K$ be the set of
endpoints of edges assigned color $j$, and put
\begin{equation}\label{eq:cover-sets}
 P_1=B\cup E_1,\qquad
 P_j=(B\setminus\{b_0\})\cup E_j\quad(2\le j\le r).
\end{equation}
Each $x\in X$ contributes at most one vertex to each $E_j$, and $x_0$
contributes none to $E_1$. Consequently,
\begin{equation}\label{eq:cover-size}
 |P_j|\le |B|+|X|-1\le\beta-1\le k-1\qquad(j\in[r]).
\end{equation}

For $x\in X$, match each vertex of $Q_x\setminus K$ to the color
of its edge from $x$. If $b_0\in Q_x$, color $1$ is unused at $x$,
so match $b_0$ to index $1$. Every other vertex of $Q_x\cap B$
belongs to all $P_j$ and can be matched to an unused index.
There are enough indices because $|Q_x|\le r-1$.
For $v\in R\setminus K$, we have $Q_v\subseteq B$.
Match $b_0$ to index $1$ when needed, and match the other vertices
to distinct remaining indices. Thus every $Q_v$ has the required
matching in the indexed family~\eqref{eq:cover-sets}.

Finally, take $t=k-1$. By~\eqref{eq:r-bound} and $c>k$,
\[
 r\le k-1<c\le\binom c{k-1}.
\]
Lemma~\ref{lem:distinct} gives $r$ distinct $(k-1)$-subsets of $S$
preserving every required matching. Use these as $P_1,\ldots,P_r$
and put $q=r$. This proves~\eqref{eq:parameters} and~\eqref{eq:matching}.

In both subcases, $R$ meets $A$, $F[R]$ is a forest, and every edge
from $R$ to $V(G)$ ends in $S$. With the choices of $q$ and $t$
above and $a=k-t$, form $\HH_0$ as in Construction~\ref{con:initial}.
Lemma~\ref{lem:construction} gives the required saturated hypergraph.
This completes the proof of Theorem~\ref{thm:main}.
\end{proof}



\section*{Declaration on the use of generative AI}
During the preparation of this manuscript, the author used GPT-6 Astra
(OpenAI) to explore proof strategies, assist with checking proofs, and
refine the exposition. The author reviewed and verified all mathematical
arguments, results, and conclusions presented in the manuscript and
takes full responsibility for its content.

\end{document}